\documentclass[a4paper,12pt]{article}

\usepackage[top=2.5cm,bottom=2.5cm,left=2.5cm,right=2.5cm]{geometry}%{geometry} 表示可以自定义页面设置
\usepackage{amsmath,amsthm,mathrsfs,graphicx,amsfonts}%{mathrsfs}用于产生一种数学用的花体字，数学公式宏包
\usepackage{bm}%处理数学公式中的黑斜体，公式中的粗体字符（用命令\boldsymbol）
\usepackage{cases}%\begin{numcases}{|x|=}x,&for$x\geq0$\\-x,&for$x<0$\end{numcases}
\usepackage[english]{babel}%算法
\usepackage{amsfonts}
\usepackage{latexsym}
\usepackage{graphicx}%图形宏包
\usepackage{txfonts}
\usepackage[numbers,sort&compress]{natbib}
\usepackage[natural]{xcolor}
\usepackage{rotating}
\usepackage{mathtools}

\allowdisplaybreaks
\numberwithin{equation}{section}

\newtheorem{theorem}{Theorem}[section]

\newtheorem{corollary}[theorem]{Corollary}
\theoremstyle{definition}

\theoremstyle{remark}

\newcommand{\fo}{f_{\mathrm{o}}}
\newcommand{\dd}{\mathbin{\dot\cup}}
\newcommand{\E}{\mathbb{E}}

\begin{document}
\title{Large odd induced subgraphs via odd cuts}

% Anonymous submission version. Replace the author block before a
% non-anonymous submission and add \address and \email as appropriate.
\author{Qinghou Zeng\footnote{Center for Discrete Mathematics, Fuzhou University, Fuzhou, Fujian 350003, China. Research supported by National Key R\&D Program of China (Grant No.~2023YFA1010202) and National Natural Science Foundation of China (Grant No.~12371342). Email: zengqh@fzu.edu.cn.}}
\date{}
\maketitle
\begin{abstract}
%In this paper, we show that every $n$-vertex graph without isolated vertices has an induced subgraph with all degrees odd on at least $n/5$ vertices. The key ingredient is analogous to Gallai's even-even partition theorem, and we provide an odd-odd partition result for graphs possessing odd cuts.

%A classical result of Gallai asserts that every graph has a vertex partition into two sets, each inducing a subgraph with all degrees even. We prove that if a graph has a bipartition such that every vertex has an odd number of neighbors on the opposite side, then it can be partitioned into two sets, each inducing a subgraph with all degrees odd. As an application, we show that every $n$-vertex graph without isolated vertices has an induced subgraph with all degrees odd on at least $n/5$ vertices.

%Gallai proved that every graph can be partitioned into two sets, each inducing a subgraph with all degrees even. We show that if a graph admits a bipartition in which every vertex has odd cross-degree, then it can be partitioned into two sets, each inducing a subgraph with all degrees odd. Consequently, every (n)-vertex graph without isolated vertices has an induced subgraph with all degrees odd on at least (n/5) vertices.
Gallai proved that every graph can be partitioned into two sets, each
inducing a subgraph with all degrees even. We show that if a graph
admits a bipartition in which every vertex has an odd number of
neighbors in the opposite part, then it can be partitioned into two sets,
each inducing a subgraph with all degrees odd. Consequently, every
$n$-vertex graph without isolated vertices has an induced subgraph with
all degrees odd on at least $n/5$ vertices, substantially improving the previously known
universal lower bound.
\end{abstract}
%\footnotetext{2010 Mathematics Subject Classification: primary 05C35; secondary 05C70.}
%{odd induced subgraph, domination number, private neighbor, degree parity, probabilistic method}

\noindent\textbf{2020 Mathematics Subject Classification:}
Primary 05C35; Secondary 05C69.

\medskip
\noindent\textbf{Keywords:}
odd induced subgraph, odd cut, parity partition,  domination, probabilistic method

\section{Introduction}

All graphs in this paper are finite and simple. A graph is called
\emph{even} or \emph{odd} if every vertex has even or odd degree,
respectively.
We begin with a classical partition theorem of Gallai; see, for
example, Lov\'asz~\cite[Problem~5.17]{Lov2007}.

\begin{theorem}[Gallai's Theorem]\label{thm:gallai}
Every graph $G$ admits a partition $V(G)=V_1\dd V_2$ such that both
$G[V_1]$ and $G[V_2]$ are even.
\end{theorem}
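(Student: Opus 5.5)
We argue by induction on $|V(G)|$. When $G$ is empty, or more generally has no edges, we may take $V_1=V(G)$ and $V_2=\emptyset$, since every degree is $0$. Now suppose $|V(G)|\ge 1$.

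\textbf{Case 1: $G$ is even.} Again $V_1=V(G)$, $V_2=\emptyset$ works.

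\textbf{Case 2: some vertex $v$ has odd degree.} Let $N=N(v)$. Form the graph $G'$ on $V(G)\setminus\{v\}$ by complementing the edges inside $N$: for distinct $x,y\in N$, the pair $xy$ is an edge of $G'$ exactly when it is not an edge of $G$. All other adjacencies are unchanged. By induction, $G'$ has a partition $V(G')=U_1\dd U_2$ with $G'[U_1]$ and $G'[U_2]$ even. Since $|N|$ is odd, the sizes $|N\cap U_1|$ and $|N\cap U_2|$ have opposite parity. Relabel so that $|N\cap U_1|$ is even, and put $V_1=U_1\cup\{v\}$, $V_2=U_2$.

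\textbf{Verification.} Take $x\in U_i$.
\begin{itemize}
\item If $x\notin N$, then $\deg_{G[V_i]}(x)=\deg_{G'[U_i]}(x)$, which is even. Here $v$ is not adjacent to $x$, and no complemented pair involves $x$.
\item If $x\in N\cap U_i$, complementing changes $x$'s degree inside $U_i$ by $(|N\cap U_i|-1)-2k$, where $k$ is the number of $G'$-neighbours of $x$ in $N\cap U_i$. So the parity of $\deg_{G[U_i]}(x)$ equals the parity of $\deg_{G'[U_i]}(x)+|N\cap U_i|-1$.
 \begin{itemize}
 \item For $i=1$: $|N\cap U_1|-1$ is odd, so $\deg_{G[U_1]}(x)$ is odd. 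Adding the neighbour $v$ makes $\deg_{G[V_1]}(x)$ even.
 \item For $i=2$: $|N\cap U_2|-1$ is even, so $\deg_{G[V_2]}(x)$ is even, and $v\notin V_2$.
 \end{itemize}
\item Finally, $\deg_{G[V_1]}(v)=|N\cap U_1|$, which is even.
\end{itemize}

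The main obstacle is finding the right local modification: choosing to complement the neighbourhood of an odd-degree vertex, so that the single parity fact "$|N|$ odd" fixes all the degree parities at once.
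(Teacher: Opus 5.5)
Your proof is correct and complete. It is the standard inductive proof: delete an odd-degree vertex $v$, complement the edges inside $N(v)$, apply induction, and add $v$ to the part containing an even number of its neighbours. The paper gives no proof of Gallai's theorem and only cites Lov\'asz's Problem~5.17, so there is nothing in the paper to compare it against.
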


Note that there is no unconditional odd analogue of this theorem
since any graph with an odd number of vertices cannot be
partitioned into two odd induced subgraphs. If an arbitrary
number of parts is allowed, Scott~\cite{Sco2001} proved that a graph
can be partitioned into odd induced subgraphs if and only if each of
its components has even order.
Such partitions have since been studied
from both structural and algorithmic viewpoints; see
\cite{AAG2023,BS2021,
BHK2025}.
%Such partitions, and the associated odd
%chromatic number, have since been studied further; see, for example, Belmonte, Harutyunyan, K{\"o}hler and Melissinos~\cite{BHK2025}.
We show that a natural cut condition
guarantees that two parts suffice.
%Thus, every odd-cut set induces a graph of odd chromatic number at most two.
For a graph $G$, we call a set $S\subseteq V(G)$  an \emph{odd-cut set} if it admits a
partition $S=P\dd Q$ such that every vertex of $S$ has an odd number
of neighbors in the opposite part.

\begin{theorem}\label{thm:lifting}
Let $G$ be a graph and let $S$ be an odd-cut set of $G$. Then there is
a partition $S=S_0\dd S_1$ such that both $G[S_0]$ and $G[S_1]$ are
odd.
\end{theorem}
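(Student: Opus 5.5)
The plan is to reduce the statement to solving one linear system over $\mathbb{F}_2$, in the style of the standard linear-algebra proof of Theorem~\ref{thm:gallai}. Set $H=G[S]$, let $A$ be the adjacency matrix of $H$ over $\mathbb{F}_2$, let $d\in\mathbb{F}_2^S$ be the vector of degrees $d_v=\deg_H(v)$ reduced mod~$2$, and let $D=\mathrm{diag}(d)$. We encode $S_0$ by its indicator vector $x$.

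\textbf{Step 1: rewrite the parity conditions.} If $x_v=1$, we need $(Ax)_v=1$. If $x_v=0$, the number of neighbours of $v$ in $S_1$ is $d_v-(Ax)_v$, so we need $(Ax)_v=d_v+1$. Checking the two cases $x_v=0,1$ shows that both requirements are the single equation
\[
(Ax)_v=(d_v+1)+d_vx_v .
\]
So the theorem is equivalent to solvability of
\[
Mx=d+\mathbf 1,\qquad M:=A+D .
\]

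\textbf{Step 2: use symmetry.} Since $M$ is symmetric, its column space is the orthogonal complement of $\ker M$. It therefore suffices to show that both $\mathbf 1$ and $d$ are orthogonal to every $y\in\ker M$. I will in fact put each of them in the column space separately.

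\textbf{Step 3: the odd cut gives $\mathbf 1$.} Let $p$ be the indicator vector of $P$. For $v\in P$ we have $(Mp)_v=|N(v)\cap P|+d_v\equiv|N(v)\cap Q|$. For $v\in Q$ we have $(Mp)_v=|N(v)\cap P|$. By the odd-cut hypothesis both are odd, so $Mp=\mathbf 1$. This is the only place the hypothesis enters. I expect it to be the crux of the proof: the point is to spot that the cut indicator is an explicit preimage of $\mathbf 1$.

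\textbf{Step 4: $d$ is orthogonal to the kernel.} Take $y\in\ker M$ with support $Y$. For $v\notin Y$, the equation $(My)_v=0$ says $|N(v)\cap Y|$ is even. Hence
\[
y\cdot d=\sum_{v\in Y}d_v\equiv 2e(Y)+e(Y,S\setminus Y)\equiv\sum_{v\notin Y}|N(v)\cap Y|\equiv 0 .
\]
Consequently $d$ lies in the column space of $M$.

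Combining Steps 3 and 4, $d+\mathbf 1$ lies in the column space of $M$. The resulting solution $x$ gives the required partition $S_0=\{v:x_v=1\}$ and $S_1=S\setminus S_0$.
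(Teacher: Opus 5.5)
Your proof is correct and is essentially the paper's argument in $\mathbb{F}_2$-linear form. Your Step~4 is the standard linear-algebra proof of Gallai's theorem: a solution of $Mx=d$ is the indicator of one side $U$ of an even--even partition $U\cup W$ of $G[S]$. Step~3 shows the cut indicator $p$ solves $Mx=\mathbf 1$, and the resulting solution $\mathbf 1_U+p$ is the indicator of $U\triangle P=(W\cap P)\cup(U\cap Q)$, which is exactly the paper's $S_1$. The only differences are that you reprove Gallai's theorem inline rather than citing it, and linearity of $M$ replaces the paper's direct check $d_G(v,A\triangle B)\equiv d_G(v,A)+d_G(v,B)$.
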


Gallai's partition immediately yields an even induced subgraph on at
least half of the vertices, and paths show that the factor $1/2$ is
best possible. This motivates the corresponding extremal question for
odd induced subgraphs. Note that an isolated vertex cannot belong to an odd induced subgraph.
Thus, we restrict attention to graphs without isolated vertices.

For a graph $G$, let
\[
   \fo(G):=\max\bigl\{|S|:S\subseteq V(G)
   \text{ and }G[S]\text{ is odd}\bigr\}.
\]
The problem of bounding $\fo(G)$ was studied by
Caro~\cite{Car1994} and Scott~\cite{Sco1992}, who established lower
bounds of order $\sqrt n$ and $n/\log n$, respectively. Resolving the
long-standing conjecture (cited by Caro \cite{Car1994} as ``part of the graph theory
folklore") that a linear bound holds, Ferber and
Krivelevich~\cite{FK2022} proved the bound $n/10000$.
Very recently, Gutin, Hao, and Zhou~\cite{GHZ2026}
proved a more general result implying the bound $2n/21$. Sharper bounds are known for several graph classes, including
trees, graphs of bounded maximum degree or treewidth, planar graphs of
large girth, line graphs, and claw-free
graphs~\cite{RS1995,BWW1997,HYL2018,
RHZ2022,WW2024,Nin2026}.

By Theorem~\ref{thm:lifting}, every odd-cut set $S$ yields an odd
induced subgraph on at least $|S|/2$ vertices. Thus, the problem reduces
to constructing large odd-cut sets. Together with Scott's bipartite
bound~\cite{Sco1992}, this observation gives a short proof of the
following consequence.

\begin{corollary}\label{cor:eighth}
Every $n$-vertex graph $G$ without isolated vertices satisfies
\[
   \fo(G)\ge \frac n8.
\]
\end{corollary}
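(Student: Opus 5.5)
The plan is to combine Theorem~\ref{thm:lifting} with a bipartite bound, using a maximum cut to pass from $G$ to a bipartite spanning subgraph. Fix a partition $V(G)=A\dd B$ maximizing the number of edges between $A$ and $B$. Let $H$ be the spanning bipartite subgraph of $G$ consisting of exactly these crossing edges.

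\textbf{Step 1: $H$ has no isolated vertices.} By maximality of the cut, every vertex $v$ has at least $\deg_G(v)/2$ neighbors in the part opposite to its own. Otherwise moving $v$ to the other side would strictly increase the cut. Since $G$ has no isolated vertices, $\deg_G(v)\ge 1$, so $v$ has at least one neighbor across the cut. Hence $H$ is a bipartite graph on $n$ vertices without isolated vertices.

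\textbf{Step 2: apply Scott's bipartite bound to $H$.} The input I will take from~\cite{Sco1992} is that every bipartite graph on $n$ vertices without isolated vertices has an odd induced subgraph on at least $n/4$ vertices. This gives a set $S\subseteq V(G)$ with $|S|\ge n/4$ such that $H[S]$ is odd. Confirming the exact form of Scott's bipartite bound, and that it indeed yields the constant $1/4$, is the one point I would check carefully against~\cite{Sco1992}; everything else is formal.

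\textbf{Step 3: $S$ is an odd-cut set of $G$.} Put $P=S\cap A$ and $Q=S\cap B$. The edges of $H[S]$ are precisely the edges of $G[S]$ joining $P$ and $Q$. So the degree of a vertex $v\in S$ in $H[S]$ equals the number of its $G$-neighbors in the opposite part of the partition $S=P\dd Q$. Since $H[S]$ is odd, every vertex of $S$ has an odd number of neighbors in the opposite part. Thus $S$ is an odd-cut set of $G$.

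\textbf{Step 4: lift.} Theorem~\ref{thm:lifting} partitions $S=S_0\dd S_1$ with $G[S_0]$ and $G[S_1]$ both odd. The larger of the two parts has at least $|S|/2\ge n/8$ vertices, so $\fo(G)\ge n/8$.
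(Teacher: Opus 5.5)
Your proof is correct and follows the paper's argument: take a bipartite spanning subgraph with no isolated vertices, apply Scott's $n/4$ bound for bipartite graphs, observe that the resulting set is an odd-cut set of $G$, and halve it via Theorem~\ref{thm:lifting}. The only difference is that you obtain the bipartition from a maximum cut, whereas the paper 2-colours a spanning tree of each component; both choices guarantee that every vertex has a neighbor across the cut, so this difference is immaterial.
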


This argument uses only a spanning bipartite subgraph of $G$. By
constructing larger odd-cut sets through domination, we obtain a
stronger bound stated in our main theorem.

\begin{theorem}\label{thm:main}
Every $n$-vertex graph $G$ without isolated vertices satisfies
\[
   \fo(G)\ge \frac n5.
\]
\end{theorem}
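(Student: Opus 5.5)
By Theorem~\ref{thm:lifting}, it suffices to find an odd-cut set $S$ with $|S|\ge 2n/5$. Then one of $G[S_0]$, $G[S_1]$ is an odd induced subgraph on at least $|S|/2\ge n/5$ vertices. Odd-cut sets in distinct components combine, so I would assume $G$ is connected. I would build $S$ from a domination structure, as the text before the theorem suggests, and use a random parity choice to make the cross-degrees odd.

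The first step is a structural reduction. Take a minimal dominating set $D$, or equivalently a spanning star forest: every graph without isolated vertices has a spanning forest $F$ whose components are stars with at least two vertices. Let the centers form $C$. Each leaf has a unique center. By minimality one can also arrange that each center $c$ has a private neighbor, or that $c$ is itself private. The plan is to put the centers in $P$ and a chosen subset of leaves in $Q$, aiming for a set $S=P\dd Q$ in which every vertex has odd degree across the cut.

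Odd cross-degrees are a linear condition over $\mathbb{F}_2$. For each center $c$ we need $|N(c)\cap Q|$ odd. For each leaf $\ell\in Q$ we need $|N(\ell)\cap P|$ odd. Leaves adjacent to several centers cause trouble. I would therefore pass to a sub-configuration, choosing $P\subseteq C$ and leaves so that each chosen leaf has exactly one neighbor in $P$. This is the private-neighbor trick.

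The counting step is probabilistic. Choose a random subset $P\subseteq C$. For each $c\in P$, let $L_c$ be the set of leaves whose only neighbor in $P$ is $c$; each such leaf has cross-degree $1$. Put into $Q$ all of $L_c$ if $|L_c|$ is odd, and all but one leaf otherwise. This makes $c$'s cross-degree odd, though we must also handle the case $L_c=\emptyset$ by dropping $c$. We must check that leaves in $Q$ have no neighbors in other parts of $P$; this holds by the definition of $L_c$. Leaf–leaf and center–center edges inside one side are irrelevant, since only cross-degrees matter. The loss is at most one vertex per star plus the non-private leaves.

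Star components with one or two leaves are the bad cases. For $K_2$ itself we get $2$ vertices, which is fine. Then I would tune the inclusion probability $p$ for centers, or use a weighted mix of strategies such as also allowing leaves as the $P$-side, and optimize the expected value of $|S|$ to reach $2n/5$. The small extremal examples, where a $5$-vertex graph forces $f_o=1$, suggest the worst case is some small star-like gadget.

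The main obstacle is this expectation computation. A leaf adjacent to many centers is private with low probability, so a naive random $P$ loses too much. I would first try to choose $F$ extremally, minimizing the number of stars and then maximizing something lexicographically, so that non-star edges from leaves to other centers are controlled. If that fails, I would combine the random construction with Scott's bipartite bound, as in Corollary~\ref{cor:eighth}, on the remaining part and take the better of the two.
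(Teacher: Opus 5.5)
There is a genuine gap. The expectation computation that is supposed to produce an odd-cut set of size $2n/5$ is never carried out, and the construction as you set it up cannot support it. You require each selected leaf to have \emph{exactly one} neighbor in $P$. So a vertex with $k$ neighbors in $C$ is usable only with probability $kp(1-p)^{k-1}$, which is small unless $k\approx 1/p$. When these degrees range over many scales, tuning $p$ cannot help, and you do not show that the extremal choice of $F$ prevents this. The fallback to Scott's bipartite bound gives only $n/8$ on its own (Corollary~\ref{cor:eighth}). You also give no trade-off showing that the better of the two reaches $n/5$.

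The missing idea is that only parity matters. Take a minimum dominating set $D$ in which every $u\in D$ has a nonempty set $Q_u$ of external private neighbors (Theorem~\ref{thm:private-domination}). Mere minimality allows self-private vertices with $Q_u=\varnothing$, which is not enough here. Let $R$ be the set of vertices outside $D$ with at least two neighbors in $D$, and choose $Z\subseteq D$ uniformly at random. Put into the cut:
\begin{itemize}
\item all of $Z$;
\item the set $R_Z$ of all $v\in R$ with $d_G(v,Z)$ odd;
\item for each $u\in Z$, a set $T_u\subseteq Q_u$ with $|T_u|\ge|Q_u|-1$ making $d_G(u,R_Z)+|T_u|$ odd.
\end{itemize}
Each $v\in R$ lies in $R_Z$ with probability exactly $1/2$, however many neighbors it has in $D$: condition on all but one of them. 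The vertex $u$ itself pays for the possibly discarded private neighbor. This yields an odd-cut set of expected size at least $(n-\gamma(G))/2$, hence $\fo(G)\ge(n-\gamma(G))/4$.

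That bound is too weak when $\gamma(G)$ is large; for a perfect matching it gives only $n/8$. So you also need the deterministic odd cut $D\dd\{q_u:u\in D\}$, with one $q_u\in Q_u$ for each $u$. In it every vertex has exactly one neighbor across, so $\fo(G)\ge\gamma(G)$. Then $\max\{\gamma,(n-\gamma)/4\}\ge\frac15\gamma+\frac45\cdot\frac{n-\gamma}{4}=\frac n5$.

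Separately, $\fo(G)$ is always even and at least $2$ for graphs without isolated vertices, so no $5$-vertex graph has $\fo=1$. The small-gadget heuristic guiding your optimization is therefore mistaken.
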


\noindent\textbf{Notation.}
Let $G=(V(G),E(G))$ be a graph. For any $v\in V(G)$, let $N_G(v)$ and $d_G(v)$ denote its \emph{neighborhood} and
\emph{degree}, respectively. For $X\subseteq V(G)$, let $G[X]$ denote the
subgraph induced by $X$, and set
$d_G(v,X):=|N_G(v)\cap X|$. For disjoint $X,Y\subseteq V(G)$, let
$G[X,Y]$ denote the bipartite subgraph of $G$ consisting of all edges
between $X$ and $Y$.
%\newpage

\section{Odd cuts and a universal lower bound}\label{sec:odd-cuts}
In this section, we prove Theorem~\ref{thm:lifting} and Corollary~\ref{cor:eighth}.

\begin{proof}[{\bf Proof of Theorem \ref{thm:lifting}}]
Choose a partition $S=P\dd Q$ such that every vertex of $S$ has an
odd number of neighbors in the opposite part. Applying
Theorem~\ref{thm:gallai} to $G[S]$, there is a partition
$S=U\dd W$ such that both $G[U]$ and $G[W]$ are even. Let
\[
   S_0:=(U\cap P)\cup(W\cap Q)
   \;\,\text{and}\;\,
   S_1:=(W\cap P)\cup(U\cap Q).
\]
Clearly, $S=S_0\dd S_1$. Now, we show that both $G[S_0]$ and $G[S_1]$ are odd.

Fix $i\in\{0,1\}$ and $v\in S_i$. Let $A\in\{U,W\}$ be the part containing $v$, and let $B\in\{P,Q\}$ be the part not containing
$v$. Clearly, we have $S_i=A\triangle B$ by the construction of $S_i$, where $\triangle$ denotes the symmetric difference. Hence
\[
\begin{aligned}
   d_{G[S_i]}(v)
   =d_G(v,A\triangle B)
   \equiv d_G(v,A)+d_G(v,B)
   \equiv 0+1
   \equiv 1\pmod 2.
\end{aligned}
\]
Indeed, $d_G(v,A)$ is even because $G[A]$ is even, while $d_G(v,B)$
is odd by the choice of $P\dd Q$. Hence, both $G[S_0]$ and $G[S_1]$
are odd, completing the proof of Theorem \ref{thm:lifting}.
\end{proof}

\begin{proof}[{\bf Proof of Corollary \ref{cor:eighth}}]
Choose a spanning tree in each component of $G$, and combine their
bipartitions into a partition $V(G)=P\dd Q$. Let $B=G[P,Q]$ be the spanning bipartite graph consisting of all edges crossing the cut.
Every vertex is incident with an edge of one of the spanning trees, so
$B$ has no isolated vertices. Note that Scott \cite{Sco1992} proved that $\fo(B)\ge |V(B)|/4$ for every bipartite graph $B$ without isolated vertices. It follows that there is a
set $S\subseteq V(G)$ such that $B[S]$ is odd and $|S|\ge n/4$.
Since $B[S]=G[S\cap P,S\cap Q]$,
every vertex of $S$ has an odd number of neighbors in the opposite part
of the partition  $S=(S\cap P)\dd(S\cap Q)$.
Thus, $S$ is an odd-cut set of $G$, and $f_o(G)\ge |S|/2\ge n/8$ by Theorem \ref{thm:lifting}.
\end{proof}

\section{Large odd cuts from domination}\label{sec:domination}
A set $D\subseteq V(G)$ is \emph{dominating} if every vertex outside
$D$ has a neighbor in $D$. The minimum cardinality of a dominating set
is the \emph{domination number} $\gamma(G)$. We use the following
theorem of Bollob\'as and Cockayne~\cite{BC1979}; see also
\cite[Lemma~4.22, p.~80]{HHH2023} for a proof.

\begin{theorem}[Bollob\'as and Cockayne~\cite{BC1979}]\label{thm:private-domination}
Every graph without isolated vertices has a minimum dominating set $D$
such that, for every $u\in D$, there exists a vertex
$v\in V(G)\setminus D$ satisfying $N_{G}(v)\cap D=\{u\}$.
\end{theorem}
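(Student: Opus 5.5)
We plan an extremal argument: among all minimum dominating sets we pick the one with the most internal edges. Let $G$ have no isolated vertices. Among all dominating sets of size $\gamma(G)$, choose $D$ so that the number of edges of $G[D]$ is as large as possible. We will show that this $D$ has the required property. Suppose not, and fix $u\in D$ such that no vertex $v\in V(G)\setminus D$ satisfies $N_G(v)\cap D=\{u\}$.

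\textbf{Step 1 (structure of $u$).} We first use the minimality of $|D|$. The set $D\setminus\{u\}$ is not dominating, so some vertex $x$ is not dominated by it. Such an $x$ is either outside $D$ with $N_G(x)\cap D=\{u\}$, or $x=u$ with $N_G(u)\cap D=\emptyset$. The first option is excluded by our assumption on $u$. Hence $u$ has no neighbor in $D$, i.e., $u$ is isolated in $G[D]$. Since $G$ has no isolated vertices, $u$ has a neighbor $w$, and necessarily $w\notin D$.

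\textbf{Step 2 (swap).} Let $D':=(D\setminus\{u\})\cup\{w\}$; then $|D'|=|D|=\gamma(G)$. We check that $D'$ is dominating:
\begin{itemize}
\item $u$ is dominated by $w$, and $w\in D'$.
\item Any other vertex $x\notin D'$ that was dominated only through $u$ would be a vertex outside $D$ with $N_G(x)\cap D=\{u\}$, which is excluded.
\item Every remaining vertex keeps a neighbor in $D\setminus\{u\}\subseteq D'$.
\end{itemize}
So $D'$ is a minimum dominating set.

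\textbf{Step 3 (edge count).} Removing $u$ deletes no edges of $G[D]$, since $u$ is isolated there. The vertex $w$ lies outside $D$ and is not a private neighbor of $u$, yet it is adjacent to $u$. Hence $w$ has at least one neighbor in $D\setminus\{u\}$. Therefore $G[D']$ has strictly more edges than $G[D]$, contradicting the choice of $D$.

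The only delicate point is Step~1. There one must observe that a vertex of a minimum dominating set lacking an external private neighbor must be its own private neighbor, and hence isolated in $G[D]$. This is exactly what makes the swap in Steps~2 and~3 both preserve domination and strictly increase the edge count.
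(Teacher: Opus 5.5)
Your proof is correct: the extremal choice of a minimum dominating set maximizing the number of edges of $G[D]$, the observation that a vertex with no external private neighbor must be isolated in $G[D]$, and the swap $D'=(D\setminus\{u\})\cup\{w\}$ all check out. The paper does not prove this statement itself but cites Bollob\'as--Cockayne and Lemma~4.22 of Haynes--Hedetniemi--Henning, and your argument is essentially the standard proof given there.
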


Let $G$ be an $n$-vertex graph without isolated vertices, and fix a
minimum dominating set $D$ as in
Theorem~\ref{thm:private-domination} so that $|D|=\gamma(G)$. For each $u\in D$, let $Q_u:=\{v\in V(G)\setminus D:N_{G}(v)\cap D=\{u\}\}$.
By Theorem~\ref{thm:private-domination}, the sets $Q_u$ with $u\in D$ are nonempty and pairwise disjoint. Let $Q:=\cup_{u\in D}Q_u$ and $R:=V(G)\setminus(D\cup Q)$.
Then
\begin{equation}\label{eq:decomposition}
   n=|D|+|Q|+|R|.
\end{equation}
%and every vertex of $R$ has at least two neighbors in $D$.

\begin{theorem}\label{thm:parameterized}
If $G$ is an $n$-vertex graph without isolated vertices, then
\[
   \fo(G)\ge
   \max\left\{\gamma(G), \frac{n-\gamma(G)}{4}\right\}.
\]
\end{theorem}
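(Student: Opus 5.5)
The plan is to prove the two lower bounds separately. In each case I will build an odd-cut set of the appropriate size and then apply Theorem~\ref{thm:lifting}, which gives $\fo(G)\ge|S|/2$ for any odd-cut set $S$. Throughout, $D$ is the minimum dominating set fixed above, with the nonempty, pairwise disjoint private-neighbour sets $Q_u$ ($u\in D$).

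\textbf{The bound $\fo(G)\ge\gamma(G)$.} For each $u\in D$ pick one vertex $q_u\in Q_u$ and set $S:=D\cup\{q_u:u\in D\}$, with parts $P:=D$ and $Q':=\{q_u:u\in D\}$.

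1. Since $N_G(q_u)\cap D=\{u\}$, each $q_u$ has exactly one neighbour in $P$.
2. Conversely, $u$ is adjacent to $q_w$ only if $w=u$, because the only $D$-neighbour of $q_w$ is $w$. So each $u$ has exactly one neighbour in $Q'$.

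Thus $G[P,Q']$ is a perfect matching, $S$ is an odd-cut set of size $2\gamma(G)$, and Theorem~\ref{thm:lifting} yields $\fo(G)\ge\gamma(G)$.

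\textbf{The bound $\fo(G)\ge(n-\gamma(G))/4$.} It suffices to find an odd-cut set of size at least $(n-\gamma(G))/2$. I will use the probabilistic method followed by a deterministic repair step.

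1. Choose $T\subseteq D$ uniformly at random, and let $Y:=\{v\in V(G)\setminus D: d_G(v,T)\text{ odd}\}$.
2. Every $v\notin D$ has at least one neighbour in $D$, since $D$ is dominating. Hence $d_G(v,T)$ is odd with probability exactly $1/2$, and $\E|Y|=(n-\gamma(G))/2$. Fix a choice of $T$ with $|Y|\ge (n-\gamma(G))/2$.
3. Every vertex of $Y$ already has an odd number of neighbours in $T$. It remains to handle vertices $u\in T$ with $d_G(u,Y)$ even.
4. For such a $u$, observe that $Q_u\subseteq Y$, because each $q\in Q_u$ satisfies $d_G(q,T)=[u\in T]=1$. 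Pick one $q\in Q_u$ and delete it from $Y$.
5. Since $q$ has no neighbour in $D$ other than $u$, this deletion flips the parity of $d_G(u,Y)$ only, and it affects no other vertex of $T$. The remaining vertices of $Y$ keep their neighbourhoods in $T$. The sets $Q_u$ are disjoint, so the repairs do not interfere with each other.
6. Let $Y'$ be the result. Then $S:=T\cup Y'$ with parts $T$ and $Y'$ is an odd-cut set.
7. At most one vertex is deleted per vertex of $T$, so $|S|=|T|+|Y'|\ge|Y|\ge(n-\gamma(G))/2$. Theorem~\ref{thm:lifting} then gives $\fo(G)\ge(n-\gamma(G))/4$.

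The main point to check carefully is the repair step. We must confirm that $Q_u$ lies inside $Y$ exactly when $u\in T$, so that a deletable private neighbour is always available. We must also confirm that deleting it changes the parity at $u$ alone.
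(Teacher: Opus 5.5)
Your proposal is correct and is essentially the paper's proof. The first bound uses the same perfect matching between $D$ and one private neighbour $q_u\in Q_u$ per $u\in D$. The second uses the same uniformly random subset of $D$ and the same parity observation outside $D$. It also uses the same repair of removing at most one vertex of $Q_u$ for each chosen $u$: your $Y'\cap Q_u$ is exactly the paper's $T_u$. The only cosmetic difference is that you fix a good $T$ before repairing, whereas the paper repairs for every $Z$ and then averages the bound $|S_Z|\ge|R_Z|+\sum_{u\in Z}|Q_u|$; this is the same computation.
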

\begin{proof}[{\bf Proof}]
We first show that $\fo(G)\ge\gamma(G)$. For each $u\in D$, choose a vertex $q_u\in Q_u$. By the definition of
$Q_u$, the partition $D\dd\{q_u:u\in D\}$ is an odd cut: every vertex
has exactly one neighbor in the opposite part. Hence,
Theorem \ref{thm:lifting} gives $\fo(G)\ge |D|=\gamma(G)$.

Now, we prove that $\fo(G)\ge(n-\gamma(G))/4$. Choose a random set $Z\subseteq D$ by
including each vertex independently with probability $1/2$, and let $R_Z:=\{v\in R:d_{G}(v,Z)\text{ is odd}\}$.
For each $u\in Z$, choose a largest set $T_u\subseteq Q_u$ satisfying
\begin{equation}\label{eq:parity-choice}
   |T_u|+d_{G}(u,R_Z)\equiv 1\pmod 2.
\end{equation}
Since $Q_u\ne\varnothing$, the required parity can be obtained by
deleting at most one vertex, and hence
\begin{equation}\label{eq:private-size}
   |T_u|\ge |Q_u|-1.
\end{equation}

Set $S_Z:=Z\cup R_Z\cup\bigcup_{u\in Z}T_u.$ We claim that $S_Z=Z\dd(S_Z\setminus Z)$ is an odd cut. Every vertex
of $R_Z$ has an odd number of neighbors in $Z$, while every vertex of
$T_u$ has exactly one neighbor in $Z$, namely $u$. Moreover, no vertex of $T_w$ with $w\ne u$ is adjacent to $u$. Hence, for each
$u\in Z$, $d_{G}(u,S_Z\setminus Z)=d_{G}(u,R_Z)+|T_u|\equiv 1\pmod 2$
by~\eqref{eq:parity-choice}. Thus $S_Z$ is an odd-cut set.

Using \eqref{eq:private-size}, we obtain
\begin{equation}\label{eq:SZ-lower}
\begin{aligned}
   |S_Z|
   =|Z|+|R_Z|+\sum_{u\in Z}|T_u|
   &\ge |R_Z|+\sum_{u\in Z}|Q_u|.
\end{aligned}
\end{equation}
For each $u\in D$, we have $\mathbb P(u\in Z)=1/2$. Also, for every
$v\in R$, we have $\mathbb P(v\in R_Z)=1/2$. Indeed, fix a neighbor
$x\in N_{G}(v)\cap D$ and expose all choices except whether $x$ belongs to
$Z$; exactly one of the two possibilities makes $d_{G}(v,Z)$ odd.
Therefore, taking expectations in~\eqref{eq:SZ-lower} and using
linearity of expectation, we obtain
\[
\begin{aligned}
   \E|S_Z|
   &\ge
   \sum_{v\in R}\mathbb P(v\in R_Z)
   +\sum_{u\in D}|Q_u|\,\mathbb P(u\in Z)\\
   &=\frac{1}{2}|R|
     +\frac12\sum_{u\in D}|Q_u|=\frac{|R|+|Q|}{2}
    =\frac{n-|D|}{2},
\end{aligned}
\]
where the last equality follows from \eqref{eq:decomposition}. Hence
some choice of $Z$ gives an odd-cut set $S_Z$ with $|S_Z|\ge(n-|D|)/2$.
Again, Theorem \ref{thm:lifting} then yields
\[
   \fo(G)\ge\frac{|S_Z|}{2}
   \ge\frac{n-|D|}{4}
   =\frac{n-\gamma(G)}{4}.
\]
Hence, we complete the proof of Theorem \ref{thm:parameterized}.
\end{proof}

\begin{proof}[{\bf Proof of Theorem~\ref{thm:main}}]
Since the maximum of two numbers is at least any convex combination
of them, Theorem~\ref{thm:parameterized} gives
\[
\fo(G)\ge\frac15\cdot\gamma(G)+\frac45\cdot\frac{n-\gamma(G)}{4}=\frac n5,
\]
completing the proof of Theorem \ref{thm:main}.
\end{proof}

\section{Concluding remarks}\label{sec:conclusion}
The choice of probability $1/2$ in the proof of
Theorem~\ref{thm:parameterized} was chosen for simplicity. The same
argument can be slightly sharpened by using an appropriately chosen
inclusion probability $p\in[1/2,1]$. Since the resulting improvement
over $n/5$ is marginal, we retain the cleaner bound in
Theorem~\ref{thm:main} to keep the proof particularly transparent.

As observed by Caro~\cite{Car1994}, the complement of the $7$-cycle shows that the optimal universal constant
is at most $2/7$. This bound is known to be
sharp for graphs of maximum degree at most four; see Ai, Guo, Gutin, Hao and Yeo~\cite{AGG2025}.
 Together with Theorem~\ref{thm:main}, this places
the optimal constant for general graphs without isolated vertices between $1/5$ and
$2/7$. Determining its exact value remains open. More broadly, the
odd-cut viewpoint may be useful in parity problems where cross-degrees
are easier to control than degrees within induced subgraphs.

\section*{Declaration on the Use of Generative AI}
The author used ChatGPT (OpenAI) for language polishing, structural
organization, and improving the exposition of some proofs. All mathematical arguments and proofs
were independently checked and finalized by the author, who takes full
responsibility for the manuscript.

\end{document}